\newtheorem{theorem}{Theorem}
\theoremstyle{plain}
\newtheorem{definition}{Definition}
\newtheorem{lemma}{Lemma}
\newtheorem{remark}{Remark}
\numberwithin{equation}{section}
\begin{document}
\title[Hermite-Hadamard Inequality]{Hermite-Hadamard's inequalities for
prequasiinvex functions via fractional integrals}
\author{Imdat Iscan}
\address{Department of Mathematics, Faculty of Science and Arts, Giresun
University, Giresun, Turkey}
\email{imdat.iscan@giresun.edu.tr}
\date{March 24, 2012}
\subjclass[2000]{26D10, 26D15, 26A51}
\keywords{Hermite-Hadamard inequalities, prequasiinvex function, fractional
integral}

\begin{abstract}
In this paper, we extend some estimates of the right hand side of a
Hermite-Hadamard type inequality for prequasiinvex functions via fractional
integrals.
\end{abstract}

\maketitle

\section{Introduction and Preliminaries}

Let $f:I\subset \mathbb{R\rightarrow R}$ be a convex mapping defined on the
interval $I$ of real numbers and $a,b\in I$ with $a<b$, then

\begin{equation}
f\left( \frac{a+b}{2}\right) \leq \frac{1}{b-a}\dint\limits_{a}^{b}f(x)dx%
\leq \frac{f(a)+f(b)}{2}\text{.}  \label{1-1}
\end{equation}

This doubly inequality is known in the literature as Hermite-Hadamard
integral inequality for convex mapping. For several recent results
concerning the inequality (\ref{1-1}) we refer the interested reader to \cite%
{DP00,BOP08,SSO10,SSYB11}.

We recall that the notion of quasi-convex functions generalizes the notion
of convex functions. More precisely, a function $f:\left[ a,b\right] \mathbb{%
\rightarrow R}$ is said to be quasi-convex on $\left[ a,b\right] $ if
inequality%
\begin{equation*}
f\left( tx+(1-t)y\right) \leq \max \left\{ f(x),f(y)\right\} ,
\end{equation*}%
holds for all $x,y\in $ $I$ and $t\in \left[ 0,1\right] .$

Clearly, any convex function is quasi-convex function. Furthemore there
exist quasi-convex functions which are not convex (see \cite{I07}).

We give some necessary definitions and mathematical preliminaries of
fractional calculus theory which are used throughout this paper.

\begin{definition}
Let $f\in L\left[ a,b\right] $. The Riemann-Liouville integrals $%
J_{a^{+}}^{\alpha }f$ and $J_{b^{-}}^{\alpha }f$ of oder $\alpha >0$ with $%
a\geq 0$ are defined by

\begin{equation*}
J_{a^{+}}^{\alpha }f(x)=\frac{1}{\Gamma (\alpha )}\dint\limits_{a}^{x}\left(
x-t\right) ^{\alpha -1}f(t)dt,\ x>a
\end{equation*}

and

\begin{equation*}
J_{b^{-}}^{\alpha }f(x)=\frac{1}{\Gamma (\alpha )}\dint\limits_{x}^{b}\left(
t-x\right) ^{\alpha -1}f(t)dt,\ x<b
\end{equation*}

respectively, where $\Gamma (\alpha )$ is the Gamma function and $%
J_{a^{+}}^{0}f(x)=J_{b^{-}}^{0}f(x)=f(x).$
\end{definition}

In the case of $\alpha =1$, the fractional integral reduces to the classical
integral. For some recent result connected with fractional integral
inequalities see (\cite{SSYB11}-\cite{OY11}).

In \cite{OY11}, Ozdemir and Y\i ld\i z proved the Hadamard inequality for
quasi-convex functions via Riemann-Liouville fractional integrals as follows:

\begin{theorem}
\label{1.a}Let $f:\left[ a,b\right] \mathbb{\rightarrow R}$, be positive
function with $0\leq a<b$ and $f\in L\left[ a,b\right] .$ If $f$ is a
quasi-convex function on $\left[ a,b\right] $, then the following inequality
for fractional integrals holds:%
\begin{equation}
\frac{\Gamma (\alpha +1)}{2(b-a)^{\alpha }}\left[ J_{a^{+}}^{\alpha
}f(b)+J_{b^{-}}^{\alpha }f(a)\right] \leq \max \left\{ f(a),f(b)\right\}
\label{1-2}
\end{equation}%
with $\alpha >0.$
\end{theorem}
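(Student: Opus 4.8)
The plan is to unfold both Riemann--Liouville integrals into ordinary integrals over $[a,b]$ and then exploit quasi-convexity as a uniform pointwise bound on $f$. Directly from the definitions, $J_{a^{+}}^{\alpha}f(b)=\frac{1}{\Gamma(\alpha)}\int_a^b (b-t)^{\alpha-1}f(t)\,dt$ and $J_{b^{-}}^{\alpha}f(a)=\frac{1}{\Gamma(\alpha)}\int_a^b (t-a)^{\alpha-1}f(t)\,dt$, so the bracketed quantity is, up to the factor $1/\Gamma(\alpha)$, a single integral of $f$ against the kernel $(b-t)^{\alpha-1}+(t-a)^{\alpha-1}$.

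The crucial step is to observe that quasi-convexity on $[a,b]$ forces $f(t)\le\max\{f(a),f(b)\}$ for every $t\in[a,b]$. Indeed, writing $t=\lambda a+(1-\lambda)b$ with $\lambda=(b-t)/(b-a)\in[0,1]$, the defining inequality $f(\lambda a+(1-\lambda)b)\le\max\{f(a),f(b)\}$ yields the bound directly. Thus the entire range of $f$ on $[a,b]$ is dominated by the single constant $M:=\max\{f(a),f(b)\}$.

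With this in hand I would bound each integral by pulling $M$ outside and integrating the kernel, using $\int_a^b (b-t)^{\alpha-1}\,dt=\int_a^b (t-a)^{\alpha-1}\,dt=(b-a)^{\alpha}/\alpha$. Hence $\int_a^b\big[(b-t)^{\alpha-1}+(t-a)^{\alpha-1}\big]f(t)\,dt\le 2M(b-a)^{\alpha}/\alpha$. Multiplying through by $\Gamma(\alpha+1)/\big(2(b-a)^{\alpha}\Gamma(\alpha)\big)$ and invoking $\Gamma(\alpha+1)=\alpha\Gamma(\alpha)$, the factors of $\alpha$, of $(b-a)^{\alpha}$, and of $2$ all cancel, leaving exactly $M$, which is the asserted right-hand side.

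There is essentially no analytic obstacle here; the only point to check with care is the normalization, namely that the Gamma-function and power factors cancel cleanly to produce the constant $1$ in front of $\max\{f(a),f(b)\}$. The positivity and integrability hypotheses on $f$ serve only to guarantee that the integrals are finite and that the manipulations are legitimate. The entire substance of the result is the pointwise domination supplied by quasi-convexity; once that is noted, the fractional weighting is immaterial.
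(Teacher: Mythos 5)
Your proof is correct and is essentially the same argument as the paper's: the paper proves this statement as the special case $\eta(b,a)=b-a$ of Theorem \ref{2.1}, whose proof likewise rests on the pointwise domination $f\le\max\{f(a),f(b)\}$ supplied by (pre)quasi-(in)vexity, integrated against the fractional kernel so that the $\Gamma$ and power factors cancel. The only cosmetic difference is that you work directly in the variable $t\in[a,b]$ with the kernels $(b-t)^{\alpha-1}$ and $(t-a)^{\alpha-1}$, while the paper multiplies the bound by $t^{\alpha-1}$ on the parameter interval $[0,1]$ and recognizes the fractional integrals after substitution; the two computations coincide under the change of variables $t\mapsto a+t(b-a)$.
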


\begin{theorem}
\label{1.c}Let $f:\left[ a,b\right] \mathbb{\rightarrow R}$, be a
differentiable mapping on $\left( a,b\right) $ with $a<b.$ If $\left\vert
f^{\prime }\right\vert $ is quasi-convex on $\left[ a,b\right] ,\ \alpha >0,$
then the following inequality for fractional integrals holds:%
\begin{eqnarray}
&&\left\vert \frac{f(a)+f(b)}{2}-\frac{\Gamma (\alpha +1)}{2(b-a)^{\alpha }}%
\left[ J_{a^{+}}^{\alpha }f(b)+J_{b^{-}}^{\alpha }f(a)\right] \right\vert
\label{1-3} \\
&\leq &\frac{b-a}{\alpha +1}\left( 1-\frac{1}{2^{\alpha }}\right) \max
\left\{ \left\vert f^{\prime }(a)\right\vert ,\left\vert f^{\prime
}(b)\right\vert \right\} .  \notag
\end{eqnarray}
\end{theorem}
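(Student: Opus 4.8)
The plan is to proceed by the standard two-stage argument for Hermite--Hadamard estimates via fractional integrals: first derive an integral identity that expresses the left-hand side in terms of $f'$, and then bound the resulting integral using the quasi-convexity hypothesis.

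First I would establish the key identity
$$\frac{f(a)+f(b)}{2}-\frac{\Gamma(\alpha+1)}{2(b-a)^\alpha}\left[J_{a^+}^\alpha f(b)+J_{b^-}^\alpha f(a)\right]=\frac{b-a}{2}\int_0^1\left[(1-t)^\alpha-t^\alpha\right]f'(ta+(1-t)b)\,dt.$$
To prove it, I would split the right-hand integral into the two pieces carrying the weights $(1-t)^\alpha$ and $t^\alpha$, and integrate each by parts using $\frac{d}{dt}f(ta+(1-t)b)=(a-b)f'(ta+(1-t)b)$. The boundary terms produce $f(a)$ and $f(b)$, while the remaining integrals, after the substitution $u=ta+(1-t)b$, reproduce $J_{b^-}^\alpha f(a)$ and $J_{a^+}^\alpha f(b)$ up to the factor $\Gamma(\alpha+1)/(b-a)^\alpha$, by the definitions of the Riemann--Liouville integrals. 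I expect this identity to be the main obstacle, since care is needed with the orientation of the substitution and with the signs $1/(a-b)=-1/(b-a)$; obtaining the correct linear combination of the two fractional integrals is where errors typically creep in.

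Once the identity is in hand, I would take absolute values on both sides and apply the triangle inequality to obtain
$$\left|\frac{f(a)+f(b)}{2}-\frac{\Gamma(\alpha+1)}{2(b-a)^\alpha}\left[J_{a^+}^\alpha f(b)+J_{b^-}^\alpha f(a)\right]\right|\le\frac{b-a}{2}\int_0^1\left|(1-t)^\alpha-t^\alpha\right|\left|f'(ta+(1-t)b)\right|\,dt.$$
Since $ta+(1-t)b$ is a convex combination of $a$ and $b$, the quasi-convexity of $|f'|$ gives $|f'(ta+(1-t)b)|\le\max\{|f'(a)|,|f'(b)|\}$ for every $t\in[0,1]$, so this factor can be pulled out of the integral.

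It then remains to evaluate $\int_0^1|(1-t)^\alpha-t^\alpha|\,dt$. I would split the domain at $t=1/2$: on $[0,1/2]$ one has $(1-t)^\alpha\ge t^\alpha$ and on $[1/2,1]$ the reverse, and by the symmetry $t\mapsto 1-t$ the integral equals $2\int_0^{1/2}[(1-t)^\alpha-t^\alpha]\,dt$. A routine antiderivative computation yields $\frac{2}{\alpha+1}\left(1-\frac{1}{2^\alpha}\right)$. Substituting this value back produces exactly the claimed bound $\frac{b-a}{\alpha+1}\left(1-\frac{1}{2^\alpha}\right)\max\{|f'(a)|,|f'(b)|\}$, completing the argument.
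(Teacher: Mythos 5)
Your proposal is correct and takes essentially the same route as the paper: the paper obtains this statement as the special case $\eta (b,a)=b-a$ of its Theorem \ref{2.2}, whose proof rests on exactly the identity you derive (Lemma \ref{1.4}, quoted from \cite{I12}, with the parametrization $a+t\eta (b,a)$, equivalent to yours under $t\mapsto 1-t$), followed by the same quasi-convexity bound on $\left\vert f^{\prime }\right\vert $ and the same evaluation $\int_{0}^{1}\left\vert t^{\alpha }-\left( 1-t\right) ^{\alpha }\right\vert dt=\frac{2}{\alpha +1}\left( 1-\frac{1}{2^{\alpha }}\right) $ obtained by splitting at $t=\frac{1}{2}$. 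The only difference is that you prove the identity directly by integration by parts, whereas the paper cites it as a known lemma; your derivation of it is correct.
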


\begin{theorem}
\label{1.d}Let $f:\left[ a,b\right] \mathbb{\rightarrow R}$, be a
differentiable mapping on $\left( a,b\right) $ with $a<b$ such that $%
f^{\prime }\in $ $L\left[ a,b\right] .$ If $\left\vert f^{\prime
}\right\vert ^{q}$ is quasi-convex on $\left[ a,b\right] ,$ and $q>1,$ then
the following inequality for fractional integrals holds:%
\begin{eqnarray}
&&\left\vert \frac{f(a)+f(b)}{2}-\frac{\Gamma (\alpha +1)}{2(b-a)^{\alpha }}%
\left[ J_{a^{+}}^{\alpha }f(b)+J_{b^{-}}^{\alpha }f(a)\right] \right\vert
\label{1-4} \\
&\leq &\frac{b-a}{2\left( \alpha p+1\right) ^{\frac{1}{p}}}\left( \max
\left\{ \left\vert f^{\prime }(a)\right\vert ^{q},\left\vert f^{\prime
}(b)\right\vert ^{q}\right\} \right) ^{\frac{1}{q}}  \notag
\end{eqnarray}%
where $\frac{1}{p}+\frac{1}{q}=1$ and $\alpha \in \left[ 0,1\right] .$
\end{theorem}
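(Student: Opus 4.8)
The plan is to reduce the fractional expression to a single integral against $f^{\prime}$ and then estimate it by Hölder's inequality. First I would invoke the fractional Hermite--Hadamard identity that underlies Theorem \ref{1.c}: for a differentiable $f$ with $f^{\prime}\in L[a,b]$,
\[
\frac{f(a)+f(b)}{2}-\frac{\Gamma(\alpha+1)}{2(b-a)^{\alpha}}\left[J_{a^{+}}^{\alpha}f(b)+J_{b^{-}}^{\alpha}f(a)\right]=\frac{b-a}{2}\int_{0}^{1}\left[(1-t)^{\alpha}-t^{\alpha}\right]f^{\prime}\left(ta+(1-t)b\right)dt.
\]
This is verified by splitting the right-hand side into the two pieces coming from $(1-t)^{\alpha}$ and $t^{\alpha}$, integrating each by parts (the boundary terms produce $\tfrac{f(a)+f(b)}{b-a}$ since $\alpha>0$), and recognizing the Riemann--Liouville integrals $J_{a^{+}}^{\alpha}f(b)$ and $J_{b^{-}}^{\alpha}f(a)$ after the change of variable $x=ta+(1-t)b$.

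Next I would pass to absolute values inside the integral and apply Hölder's inequality with the conjugate exponents $p,q$:
\begin{align*}
&\left\vert \frac{f(a)+f(b)}{2}-\frac{\Gamma(\alpha+1)}{2(b-a)^{\alpha}}\left[J_{a^{+}}^{\alpha}f(b)+J_{b^{-}}^{\alpha}f(a)\right]\right\vert \\
&\qquad\leq\frac{b-a}{2}\left(\int_{0}^{1}\left\vert (1-t)^{\alpha}-t^{\alpha}\right\vert ^{p}dt\right)^{\frac{1}{p}}\left(\int_{0}^{1}\left\vert f^{\prime}(ta+(1-t)b)\right\vert ^{q}dt\right)^{\frac{1}{q}}.
\end{align*}
For the last factor I would use the quasi-convexity of $\left\vert f^{\prime}\right\vert ^{q}$: as $t$ runs over $[0,1]$ the point $ta+(1-t)b$ runs over $[a,b]$, so $\left\vert f^{\prime}(ta+(1-t)b)\right\vert ^{q}\leq\max\{|f^{\prime}(a)|^{q},|f^{\prime}(b)|^{q}\}$, and integrating in $t$ yields exactly the factor $\left(\max\{|f^{\prime}(a)|^{q},|f^{\prime}(b)|^{q}\}\right)^{1/q}$ appearing in \eqref{1-4}.

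The remaining step, and the one I expect to be the genuine obstacle, is the sharp bound $\int_{0}^{1}\left\vert (1-t)^{\alpha}-t^{\alpha}\right\vert ^{p}dt\leq\frac{1}{\alpha p+1}$; this is precisely where the hypothesis $\alpha\in[0,1]$ enters. The device is the subadditivity of $x\mapsto x^{\alpha}$ on $[0,\infty)$ valid for $0\leq\alpha\leq1$: for $t\in[0,\tfrac{1}{2}]$ one writes $(1-t)^{\alpha}=\bigl((1-t)-t+t\bigr)^{\alpha}\leq\bigl((1-t)-t\bigr)^{\alpha}+t^{\alpha}$, hence $(1-t)^{\alpha}-t^{\alpha}\leq(1-2t)^{\alpha}$, and by the symmetry $t\leftrightarrow1-t$ this upgrades to $\left\vert (1-t)^{\alpha}-t^{\alpha}\right\vert \leq|1-2t|^{\alpha}$ on all of $[0,1]$. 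Raising to the $p$-th power and substituting $u=1-2t$ then reduces the integral to $\int_{0}^{1}u^{\alpha p}\,du=\frac{1}{\alpha p+1}$. Assembling the three factors produces the constant $\frac{b-a}{2(\alpha p+1)^{1/p}}$ and completes \eqref{1-4}. The subtle point to guard against is that the more obvious estimate $\left\vert (1-t)^{\alpha}-t^{\alpha}\right\vert ^{p}\leq(1-t)^{\alpha p}-t^{\alpha p}$ (coming from $(A-B)^{p}\leq A^{p}-B^{p}$ for $A\geq B\geq0$, $p\geq1$) does \emph{not} deliver the clean constant $\frac{1}{\alpha p+1}$ once $\alpha p>1$, so the subadditivity route is the one that yields the stated form.
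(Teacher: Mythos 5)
Your proof is correct and follows essentially the same route as the paper: the paper obtains this statement as the special case $\eta (b,a)=b-a$ of its Theorem \ref{2.4}, whose proof uses exactly your three ingredients --- the identity of Lemma \ref{1.4}, H\"{o}lder's inequality with exponents $p,q$, and the kernel estimate $\left\vert t^{\alpha }-(1-t)^{\alpha }\right\vert \leq \left\vert 1-2t\right\vert ^{\alpha }$ for $\alpha \in \left[ 0,1\right] $ giving $\dint\limits_{0}^{1}\left\vert 1-2t\right\vert ^{\alpha p}dt=\frac{1}{\alpha p+1}$ --- combined with the quasi-convexity (prequasiinvexity) bound on $\left\vert f^{\prime }\right\vert ^{q}$. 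The only difference is that you additionally prove the identity (by integration by parts) and the kernel inequality (by subadditivity of $x\mapsto x^{\alpha }$), both of which the paper simply cites or asserts as known.
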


\begin{theorem}
\label{1.e}Let $f:\left[ a,b\right] \mathbb{\rightarrow R}$, be a
differentiable mapping on $\left( a,b\right) $ with $a<b$ such that $%
f^{\prime }\in $ $L\left[ a,b\right] .$ If $\left\vert f^{\prime
}\right\vert ^{q}$ is quasi-convex on $\left[ a,b\right] ,$ and $q\geq 1,$
then the following inequality for fractional integrals holds:%
\begin{eqnarray}
&&\left\vert \frac{f(a)+f(b)}{2}-\frac{\Gamma (\alpha +1)}{2(b-a)^{\alpha }}%
\left[ J_{a^{+}}^{\alpha }f(b)+J_{b^{-}}^{\alpha }f(a)\right] \right\vert
\label{1-44} \\
&\leq &\frac{b-a}{\left( \alpha +1\right) }\left( 1-\frac{1}{2^{\alpha }}%
\right) \left( \max \left\{ \left\vert f^{\prime }(a)\right\vert
^{q},\left\vert f^{\prime }(b)\right\vert ^{q}\right\} \right) ^{\frac{1}{q}}
\notag
\end{eqnarray}%
with $\alpha >0.$
\end{theorem}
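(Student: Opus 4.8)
The plan is to collapse the quantity inside the absolute value to a single integral over $[0,1]$ by means of a Kirmaci-type identity adapted to the Riemann--Liouville fractional integrals, and then to estimate that integral using the power-mean inequality together with the quasi-convexity hypothesis. The same identity underlies Theorem \ref{1.c}, and the present theorem should reduce to Theorem \ref{1.c} when $q=1$, which is a convenient consistency check.

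First I would establish the identity
\begin{equation*}
\frac{f(a)+f(b)}{2}-\frac{\Gamma (\alpha +1)}{2(b-a)^{\alpha }}\left[ J_{a^{+}}^{\alpha }f(b)+J_{b^{-}}^{\alpha }f(a)\right] =\frac{b-a}{2}\int_{0}^{1}\left[ (1-t)^{\alpha }-t^{\alpha }\right] f^{\prime }\left( ta+(1-t)b\right) \,dt.
\end{equation*}
This follows by integrating each of the two pieces $\int_{0}^{1}(1-t)^{\alpha }f^{\prime }\left( ta+(1-t)b\right) \,dt$ and $\int_{0}^{1}t^{\alpha }f^{\prime }\left( ta+(1-t)b\right) \,dt$ by parts and then substituting $x=ta+(1-t)b$; after this change of variable the integral terms become the Riemann--Liouville integrals $J_{b^{-}}^{\alpha }f(a)$ and $J_{a^{+}}^{\alpha }f(b)$ respectively (each carrying $\Gamma (\alpha +1)$ and the appropriate power of $b-a$), while the boundary contributions assemble, together with the prefactor $\tfrac{b-a}{2}$, into $\tfrac{1}{2}\left( f(a)+f(b)\right) $. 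I expect this identity to be the main obstacle, since it demands careful bookkeeping of the boundary terms and of the constants coming from $\alpha \Gamma (\alpha )=\Gamma (\alpha +1)$ and the definitions of $J_{a^{+}}^{\alpha }$ and $J_{b^{-}}^{\alpha }$.

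Granting the identity, I would take absolute values inside the integral and write $w(t)=\left\vert (1-t)^{\alpha }-t^{\alpha }\right\vert $, obtaining the bound $\tfrac{b-a}{2}\int_{0}^{1}w(t)\left\vert f^{\prime }\left( ta+(1-t)b\right) \right\vert \,dt$. Since $q\geq 1$, I would apply the power-mean inequality with weight $w(t)$,
\begin{equation*}
\int_{0}^{1}w(t)\left\vert f^{\prime }\right\vert \,dt\leq \left( \int_{0}^{1}w(t)\,dt\right) ^{1-\frac{1}{q}}\left( \int_{0}^{1}w(t)\left\vert f^{\prime }\right\vert ^{q}\,dt\right) ^{\frac{1}{q}},
\end{equation*}
and then use the quasi-convexity of $\left\vert f^{\prime }\right\vert ^{q}$, namely $\left\vert f^{\prime }\left( ta+(1-t)b\right) \right\vert ^{q}\leq \max \left\{ \left\vert f^{\prime }(a)\right\vert ^{q},\left\vert f^{\prime }(b)\right\vert ^{q}\right\} $, to pull the maximum out of the second factor. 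The two factors recombine so that the weight integral $\int_{0}^{1}w(t)\,dt$ appears to the first power overall, and the estimate becomes $\tfrac{b-a}{2}\left( \int_{0}^{1}w(t)\,dt\right) \left( \max \left\{ \left\vert f^{\prime }(a)\right\vert ^{q},\left\vert f^{\prime }(b)\right\vert ^{q}\right\} \right) ^{\frac{1}{q}}$.

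Finally I would evaluate $\int_{0}^{1}w(t)\,dt$. Because $(1-t)^{\alpha }\geq t^{\alpha }$ on $\left[ 0,\tfrac{1}{2}\right] $ with the inequality reversed on $\left[ \tfrac{1}{2},1\right] $, the integrand is symmetric about $t=\tfrac{1}{2}$, so
\begin{equation*}
\int_{0}^{1}w(t)\,dt=2\int_{0}^{1/2}\left[ (1-t)^{\alpha }-t^{\alpha }\right] \,dt=\frac{2}{\alpha +1}\left( 1-\frac{1}{2^{\alpha }}\right) .
\end{equation*}
Substituting this value and multiplying by $\tfrac{b-a}{2}$ yields exactly the constant $\frac{b-a}{\alpha +1}\left( 1-\frac{1}{2^{\alpha }}\right) $ of (\ref{1-44}), completing the proof. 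Apart from the identity, the only point requiring care is organizing the power-mean factors so that $\int_{0}^{1}w(t)\,dt$ ends up to the first power.
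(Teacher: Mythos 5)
Your proposal is correct and follows essentially the same route as the paper: the identity you describe is exactly Lemma \ref{1.4} specialized to $\eta(b,a)=b-a$, and your power-mean step (weight $w(t)=\left\vert (1-t)^{\alpha}-t^{\alpha}\right\vert$ kept in both Hölder factors, quasi-convexity pulling out the maximum, and $\int_{0}^{1}w(t)\,dt=\tfrac{2}{\alpha+1}\left(1-2^{-\alpha}\right)$) is precisely the argument the paper gives for its generalization, Theorem \ref{2.5}, of which the stated inequality (\ref{1-44}) is the case $\eta(b,a)=b-a$. No gaps; the bookkeeping of constants and the recombination of the two Hölder factors into a single power of the weight integral work out exactly as you indicate.
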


\bigskip

In recent years several extentions and generalizations have been considered
for classical convexity. A significant generalization of convex functions is
that of invex functions introduced by Hanson in \cite{H81}. Weir and Mond 
\cite{WM98} introduced the concept of preinvex functions and applied it to
the establisment of the sufficient optimality conditions and duality in
nonlinear programming. Pini \cite{P91} introduced the concept of
prequasiinvex function as a generalization of invex functions. Later, Mohan
and Neogy \cite{MN95} obtained some properties of generalized preinvex
functions. Noor \cite{N09}-\cite{N07} has established some Hermite-Hadamard
type inequalities for preinvex and log-preinvex functions.In recent papers
Yang et al.in \cite{YYL01} studied prequasiinvex functions, and semistrictly
prequasiinvex functions and Barani et al. in \cite{BGD11} presented some
generaliztions of the right hand side of a Hermite-Hadamard type inequality
for prequasiinvex functions.

In this paper we generalized the results in \cite{OY11} for prequasiinvex
functions. Now we recall some notions in invexity analysis which will be
used throught the paper (see \cite{A05,YL01} and references therein)

Let $f:A\mathbb{\rightarrow R}$ and $\eta :A\times A\rightarrow 
\mathbb{R}
,$where $A$ is a nonempty set in $%
\mathbb{R}
^{n}$, be continuous functions.

\begin{definition}
The set $A\subseteq $ $%
\mathbb{R}
^{n}$ is said to be invex with respect to $\eta (.,.)$, if for every $x,y\in
A$ and $t\in \left[ 0,1\right] ,$%
\begin{equation*}
x+t\eta (y,x)\in A.
\end{equation*}

The invex set $A$ is also called a $\eta -$connected set.
\end{definition}

It is obvious that every convex set is invex with respect to $\eta (y,x)=y-x$%
, but there exist invex sets which are not convex \cite{A05}.

\begin{definition}
The function $f$ on the invex set $A$ is said to be preinvex with respect to 
$\eta $ if 
\begin{equation*}
f\left( x+t\eta (y,x)\right) \leq \left( 1-t\right) f(x)+tf(y),\ \forall
x,y\in A,\ t\in \left[ 0,1\right] .
\end{equation*}

The function $f$ is said to be preconcave if and only if $-f$ \ is preinvex.
\end{definition}

\begin{definition}
The function $f$ on the invex set $A$ is said to be prequasiinvex with
respect to $\eta $ if 
\begin{equation*}
f\left( x+t\eta (y,x)\right) \leq \max \left\{ f(x),f(y)\right\} ,\ \forall
x,y\in A,\ t\in \left[ 0,1\right] .
\end{equation*}
\end{definition}

Every quasi-convex function is a prequasinvex with respect to $\eta
(y,x)=y-x,$ but the converse does not holds (see example 1.1 in \cite{YYL01})

We also need the following assumption regarding the function $\eta $ which
is due to Mohan and Neogy \cite{MN95}:

\textbf{Condition C:} Let $A\subseteq $ $%
\mathbb{R}
^{n}$ be an open invex subset with respect to $\eta :A\times A\rightarrow 
\mathbb{R}
.$ For any $x,y\in A$ and any $t\in \left[ 0,1\right] ,$%
\begin{eqnarray*}
\eta \left( y,y+t\eta (x,y)\right) &=&-t\eta (x,y) \\
\eta \left( x,y+t\eta (x,y)\right) &=&(1-t)\eta (x,y).
\end{eqnarray*}

Note that for every $x,y\in A$ and every $t\in \left[ 0,1\right] $ from
condition C, we have

\begin{equation}
\eta \left( y+t_{2}\eta (x,y),y+t_{1}\eta (x,y)\right) =(t_{2}-t_{1})\eta
(x,y).  \label{1-5}
\end{equation}

In \cite{BGD11} Barani et al. proved the Hermite-Hadamard type inequality
for prequasiinvex as follows:

\begin{theorem}
\label{1.3}Let $A\subseteq $ $%
\mathbb{R}
$ be an open invex subset with respect to $\eta :A\times A\rightarrow 
\mathbb{R}
.$ Suppose that $f:A\rightarrow 
\mathbb{R}
$ is a differentiable function. If $\left\vert f^{\prime }\right\vert $ is
prequasiinvex on $A$ then, for every $a,b\in A$ the following inequalities
holds%
\begin{eqnarray}
&&\left\vert \frac{f(a)+f\left( a+\eta (b,a)\right) }{2}-\frac{1}{\eta (b,a)}%
\dint\limits_{a}^{a+\eta (b,a)}f(x)dx\right\vert  \notag \\
&\leq &\frac{\left\vert \eta (b,a)\right\vert }{4}\max \left\{ \left\vert
f^{\prime }(a)\right\vert ,\left\vert f^{\prime }(b)\right\vert \right\} .
\label{1-6}
\end{eqnarray}
\end{theorem}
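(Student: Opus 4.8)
The plan is to follow the classical Dragomir--Agarwal route, but with the ordinary interval $[a,b]$ replaced by the invex segment $\{a+t\eta(b,a):t\in[0,1]\}$, which lies in $A$ precisely because $A$ is invex with respect to $\eta$ (so $f$ and $f'$ are defined all along it). The backbone of the argument is an integral identity: I would establish that
\begin{equation*}
\frac{f(a)+f\left(a+\eta(b,a)\right)}{2}-\frac{1}{\eta(b,a)}\int_{a}^{a+\eta(b,a)}f(x)\,dx=\frac{\eta(b,a)}{2}\int_{0}^{1}(2t-1)f^{\prime}\left(a+t\eta(b,a)\right)\,dt.
\end{equation*}
To prove this I would first perform the change of variable $x=a+t\eta(b,a)$, $dx=\eta(b,a)\,dt$, which rewrites the mean value $\frac{1}{\eta(b,a)}\int_{a}^{a+\eta(b,a)}f$ as $\int_{0}^{1}f\left(a+t\eta(b,a)\right)\,dt$; note the factor $\eta(b,a)$ cancels regardless of its sign, so no orientation assumption is needed. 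Then I would integrate the right-hand side by parts with $u=2t-1$ and $dv=f^{\prime}\left(a+t\eta(b,a)\right)\,dt$, so that $v=\eta(b,a)^{-1}f\left(a+t\eta(b,a)\right)$; the boundary term supplies $\tfrac12\left[f(a)+f\left(a+\eta(b,a)\right)\right]$, while the leftover integral reproduces $\int_{0}^{1}f\left(a+t\eta(b,a)\right)\,dt$, giving the identity after rearranging.

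With the identity in hand, I would take absolute values on both sides and apply the triangle inequality for integrals to obtain
\begin{equation*}
\left\vert\frac{f(a)+f\left(a+\eta(b,a)\right)}{2}-\frac{1}{\eta(b,a)}\int_{a}^{a+\eta(b,a)}f(x)\,dx\right\vert\leq\frac{\left\vert\eta(b,a)\right\vert}{2}\int_{0}^{1}\left\vert 1-2t\right\vert\,\left\vert f^{\prime}\left(a+t\eta(b,a)\right)\right\vert\,dt,
\end{equation*}
using $\left\vert 2t-1\right\vert=\left\vert 1-2t\right\vert$. The crucial structural input now is the prequasiinvexity of $\left\vert f^{\prime}\right\vert$: taking $x=a$ and $y=b$ in the defining inequality gives $\left\vert f^{\prime}\left(a+t\eta(b,a)\right)\right\vert\leq\max\left\{\left\vert f^{\prime}(a)\right\vert,\left\vert f^{\prime}(b)\right\vert\right\}$ for every $t\in[0,1]$, so this maximum can be pulled out of the integral.

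Finally I would evaluate the elementary constant $\int_{0}^{1}\left\vert 1-2t\right\vert\,dt$, splitting at $t=\tfrac12$ to get the value $\tfrac12$, which collapses the factor $\frac{\left\vert\eta(b,a)\right\vert}{2}\cdot\frac12$ into $\frac{\left\vert\eta(b,a)\right\vert}{4}$ and yields exactly the claimed bound. I expect the only genuinely delicate step to be the careful setup of the identity, where one must keep track of the sign of $\eta(b,a)$ through the substitution and the integration by parts; the passage to absolute values is what ultimately renders the orientation irrelevant, so this step, though routine, is where a sign slip would be easiest and must be checked. Everything after the identity is a standard estimate and the final constant is computed directly.
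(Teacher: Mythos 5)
Your proposal is correct: the identity you derive is exactly the paper's Lemma \ref{1.4} specialized at $\alpha=1$ (where the kernel $t^{\alpha}-(1-t)^{\alpha}$ becomes $2t-1$), and your subsequent steps (triangle inequality, pulling out $\max\left\{ \left\vert f^{\prime }(a)\right\vert ,\left\vert f^{\prime}(b)\right\vert \right\}$ by prequasiinvexity, evaluating $\int_{0}^{1}\left\vert 1-2t\right\vert dt=\tfrac{1}{2}$) mirror the paper's proof of its Theorem \ref{2.2}, which recovers inequality (\ref{1-6}) at $\alpha=1$. So this is essentially the same approach as the one the paper uses (and attributes to Barani et al.), just written directly in the classical-integral case rather than the fractional one.
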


\begin{theorem}
\label{1.b}Let $A\subseteq $ $%
\mathbb{R}
$ be an open invex subset with respect to $\eta :A\times A\rightarrow 
\mathbb{R}
.$ Suppose that $f:A\rightarrow 
\mathbb{R}
$ is a differentiable function. Assume that $p\in 
\mathbb{R}
$ with $p>1.$ If $\left\vert f^{\prime }\right\vert ^{\frac{p}{p-1}}$ is
preinvex on $A$ then, for every $a,b\in A$ the following inequalities holds 
\begin{eqnarray}
&&\left\vert \frac{f(a)+f\left( a+\eta (b,a)\right) }{2}-\frac{1}{\eta (b,a)}%
\dint\limits_{a}^{a+\eta (b,a)}f(x)dx\right\vert  \notag \\
&\leq &\frac{\left\vert \eta (b,a)\right\vert }{2(p+1)^{\frac{1}{p}}}\left(
\max \left\{ \left\vert f^{\prime }(a)\right\vert ^{\frac{p}{p-1}%
},\left\vert f^{\prime }(b)\right\vert ^{\frac{p}{p-1}}\right\} \right) ^{%
\frac{p-1}{p}}.  \label{1-7}
\end{eqnarray}
\end{theorem}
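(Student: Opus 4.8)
The plan is to adapt the classical proof of Hermite--Hadamard type estimates to the invex setting via the substitution $x=a+t\eta(b,a)$. First I would establish the integral identity
\begin{equation*}
\frac{f(a)+f\left(a+\eta(b,a)\right)}{2}-\frac{1}{\eta(b,a)}\int_{a}^{a+\eta(b,a)}f(x)\,dx=\frac{\eta(b,a)}{2}\int_{0}^{1}(2t-1)\,f^{\prime}\left(a+t\eta(b,a)\right)\,dt,
\end{equation*}
which is exactly the lemma underlying Theorem \ref{1.3}. This is obtained by integrating the right-hand side by parts, with $u=2t-1$ and $dv=f^{\prime}\left(a+t\eta(b,a)\right)\,dt$: since $\eta(b,a)$ is constant in $t$ the antiderivative is $v=\eta(b,a)^{-1}f\left(a+t\eta(b,a)\right)$, the boundary term collapses to $\tfrac12\left[f(a)+f\left(a+\eta(b,a)\right)\right]$, and the leftover integral becomes $\eta(b,a)^{-1}\int_{a}^{a+\eta(b,a)}f(x)\,dx$ after the change of variable.

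Next I would take absolute values in this identity and apply the triangle inequality for integrals, so that the quantity to be estimated is bounded by $\tfrac{\left\vert\eta(b,a)\right\vert}{2}\int_{0}^{1}\left\vert 2t-1\right\vert\,\left\vert f^{\prime}\left(a+t\eta(b,a)\right)\right\vert\,dt$. Setting $q=\tfrac{p}{p-1}$, so that $\tfrac1p+\tfrac1q=1$, I would split this integral by H\"older's inequality into the product of $\bigl(\int_{0}^{1}\left\vert 2t-1\right\vert^{p}\,dt\bigr)^{1/p}$ and $\bigl(\int_{0}^{1}\left\vert f^{\prime}\left(a+t\eta(b,a)\right)\right\vert^{q}\,dt\bigr)^{1/q}$.

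The first factor is elementary: by symmetry about $t=\tfrac12$ one has $\int_{0}^{1}\left\vert 2t-1\right\vert^{p}\,dt=\tfrac{1}{p+1}$, whose $p$-th root is $(p+1)^{-1/p}$. For the second factor I would invoke the hypothesis that $\left\vert f^{\prime}\right\vert^{q}$ is preinvex: the defining inequality with $x=a$, $y=b$ gives $\left\vert f^{\prime}\left(a+t\eta(b,a)\right)\right\vert^{q}\leq(1-t)\left\vert f^{\prime}(a)\right\vert^{q}+t\left\vert f^{\prime}(b)\right\vert^{q}$, and integrating over $[0,1]$ yields $\int_{0}^{1}\left\vert f^{\prime}\left(a+t\eta(b,a)\right)\right\vert^{q}\,dt\leq\tfrac12\bigl(\left\vert f^{\prime}(a)\right\vert^{q}+\left\vert f^{\prime}(b)\right\vert^{q}\bigr)\leq\max\bigl\{\left\vert f^{\prime}(a)\right\vert^{q},\left\vert f^{\prime}(b)\right\vert^{q}\bigr\}$. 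Multiplying the two factors by $\tfrac{\left\vert\eta(b,a)\right\vert}{2}$ and recalling $1/q=(p-1)/p$ reproduces the right-hand side of (\ref{1-7}).

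The only genuine obstacle I anticipate is the clean derivation of the identity in the first step: one must treat $\eta(b,a)$ consistently as a constant in both the integration by parts and the substitution, and exclude the degenerate case $\eta(b,a)=0$ (where both sides vanish). Once the identity is in place, the remainder is routine --- a single H\"older estimate followed by a one-line application of preinvexity --- so I foresee no further difficulty.
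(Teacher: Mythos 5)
Your proof is correct and follows essentially the same route as the paper: the integration-by-parts identity you derive is exactly Lemma \ref{1.4} specialized to $\alpha =1$, and your H\"{o}lder-plus-preinvexity estimate is precisely the $\alpha =1$ instance of the argument the paper gives for Theorem \ref{2.4}, which (as noted in the paper's final remark, part b) reduces to inequality (\ref{1-7}). The only caveat is that the paper itself does not reprove Theorem \ref{1.b} but quotes it from \cite{BGD11}; the proof there, like the one embodied in the paper's own machinery, is the one you gave.
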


In \cite{I12}, Iscan proved the following Lemma and established some
inequalities for preinvexfunctions via fractional integrals

\begin{lemma}
\label{1.4}Let $A\subseteq $ $%
\mathbb{R}
$ be an open invex subset with respect to $\eta :A\times A\rightarrow 
\mathbb{R}
$ and $a,b\in A$ with $a<a+\eta (b,a).$ Suppose that $f:A\rightarrow 
\mathbb{R}
$ is a differentiable function. If $f^{\prime }$ is preinvex function on $A$
and $f^{\prime }\in L\left[ a,a+\eta (b,a)\right] $ then, the following
equality holds:%
\begin{eqnarray}
&&\frac{f(a)+f\left( a+\eta (b,a)\right) }{2}-\frac{\Gamma (\alpha +1)}{%
2\eta ^{\alpha }(b,a)}\left[ J_{a^{+}}^{\alpha }f(a+\eta (b,a))+J_{\left(
a+\eta (b,a)\right) ^{-}}^{\alpha }f(a)\right]  \label{1-8} \\
&=&\frac{\eta (b,a)}{2}\dint\limits_{0}^{1}\left[ t^{\alpha }-\left(
1-t\right) ^{\alpha }\right] f^{\prime }\left( a+t\eta (b,a)\right) dt 
\notag
\end{eqnarray}
\end{lemma}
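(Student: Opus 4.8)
The plan is to prove the integral identity (\ref{1-8}) by starting from the right-hand side and working backwards, using integration by parts together with the substitution that straightens out the invex structure into a standard Riemann-Liouville integral. First I would denote $I=\dint\limits_{0}^{1}\left[ t^{\alpha }-\left( 1-t\right) ^{\alpha }\right] f^{\prime }\left( a+t\eta (b,a)\right) dt$ and split it as $I=I_{1}-I_{2}$, where $I_{1}=\dint\limits_{0}^{1}t^{\alpha }f^{\prime }\left( a+t\eta (b,a)\right) dt$ and $I_{2}=\dint\limits_{0}^{1}\left( 1-t\right) ^{\alpha }f^{\prime }\left( a+t\eta (b,a)\right) dt$. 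Each piece is then handled by integration by parts, treating $t^{\alpha }$ (resp. $(1-t)^{\alpha }$) as the part to be differentiated and $f^{\prime }\left( a+t\eta (b,a)\right) dt$ as the part whose antiderivative is $\tfrac{1}{\eta (b,a)}f\left( a+t\eta (b,a)\right)$.

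For $I_{1}$, integration by parts gives a boundary term $\tfrac{1}{\eta (b,a)}f\left( a+\eta (b,a)\right)$ at $t=1$ (the $t=0$ boundary contribution vanishes because of the factor $t^{\alpha }$) minus $\tfrac{\alpha }{\eta (b,a)}\dint\limits_{0}^{1}t^{\alpha -1}f\left( a+t\eta (b,a)\right) dt$. Symmetrically, $I_{2}$ produces a boundary term $-\tfrac{1}{\eta (b,a)}f(a)$ at $t=0$ plus $\tfrac{\alpha }{\eta (b,a)}\dint\limits_{0}^{1}\left( 1-t\right) ^{\alpha -1}f\left( a+t\eta (b,a)\right) dt$. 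Collecting the boundary terms from $I_{1}-I_{2}$ yields $\tfrac{1}{\eta (b,a)}\left[ f\left( a+\eta (b,a)\right) +f(a)\right]$, which after multiplication by $\tfrac{\eta (b,a)}{2}$ becomes exactly the first term $\tfrac{f(a)+f\left( a+\eta (b,a)\right) }{2}$ on the left of (\ref{1-8}).

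The remaining task is to recognize the two leftover integrals as Riemann-Liouville fractional integrals. For this I would apply the change of variable $x=a+t\eta (b,a)$, so that $dx=\eta (b,a)\,dt$ and $t$ ranges over $[0,1]$ while $x$ ranges over $[a,a+\eta (b,a)]$. Under this substitution, $t^{\alpha -1}=\bigl(\tfrac{x-a}{\eta (b,a)}\bigr)^{\alpha -1}$ turns the integral $\dint\limits_{0}^{1}t^{\alpha -1}f\left( a+t\eta (b,a)\right) dt$ into $\eta ^{-\alpha }(b,a)\dint\limits_{a}^{a+\eta (b,a)}\left( x-a\right) ^{\alpha -1}f(x)dx$, which by the definition of $J_{\left( a+\eta (b,a)\right) ^{-}}^{\alpha }f(a)$ equals $\eta ^{-\alpha }(b,a)\,\Gamma (\alpha )\,J_{\left( a+\eta (b,a)\right) ^{-}}^{\alpha }f(a)$. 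Likewise $\left( 1-t\right) ^{\alpha -1}=\bigl(\tfrac{a+\eta (b,a)-x}{\eta (b,a)}\bigr)^{\alpha -1}$ identifies the second integral with $\eta ^{-\alpha }(b,a)\,\Gamma (\alpha )\,J_{a^{+}}^{\alpha }f(a+\eta (b,a))$. Multiplying by the prefactor $\tfrac{\alpha \eta (b,a)}{2\eta (b,a)}$ and using $\alpha \,\Gamma (\alpha )=\Gamma (\alpha +1)$ assembles these into $\tfrac{\Gamma (\alpha +1)}{2\eta ^{\alpha }(b,a)}\left[ J_{a^{+}}^{\alpha }f(a+\eta (b,a))+J_{\left( a+\eta (b,a)\right) ^{-}}^{\alpha }f(a)\right]$, matching the second term in (\ref{1-8}).

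I expect the only delicate point to be bookkeeping of signs and the $\eta (b,a)$ powers when combining $I_{1}$ and $I_{2}$, since the two pieces contribute fractional integrals from opposite endpoints and the boundary terms must cancel at the correct endpoints; the condition $a<a+\eta (b,a)$ guarantees the orientation of the substitution is positive and that the Riemann-Liouville integrals are well defined. The preinvexity hypothesis on $f^{\prime }$ and the assumption $f^{\prime }\in L\left[ a,a+\eta (b,a)\right]$ are needed only to ensure integrability so that integration by parts and Fubini-type manipulations are legitimate; they play no role in the algebraic identity itself, which is purely a consequence of integration by parts and the change of variables.
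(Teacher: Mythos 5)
Your proof is correct: the splitting of the right-hand side into $I_{1}-I_{2}$, the integration by parts with the boundary terms vanishing at $t=0$ (for $I_{1}$) and $t=1$ (for $I_{2}$) thanks to $\alpha>0$, and the substitution $x=a+t\eta (b,a)$ identifying the leftover integrals with $\frac{\Gamma (\alpha )}{\eta ^{\alpha }(b,a)}J_{\left( a+\eta (b,a)\right) ^{-}}^{\alpha }f(a)$ and $\frac{\Gamma (\alpha )}{\eta ^{\alpha }(b,a)}J_{a^{+}}^{\alpha }f(a+\eta (b,a))$ all check out, with the hypothesis $a<a+\eta (b,a)$ guaranteeing $\eta (b,a)>0$ so that the substitution and the fractional integrals are well defined; your observation that preinvexity of $f^{\prime }$ plays no role in the identity itself is also accurate. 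Note that the present paper states Lemma \ref{1.4} without proof, quoting it from \cite{I12}, and the proof given there (as in the special case $\eta (b,a)=b-a$ treated in \cite{SSYB11}) is exactly this integration-by-parts argument, so your route coincides with the standard one.
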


In this paper, using lemma \ref{1.4} we obtained new inequalities related to
the right side of Hermite-Hadamard inequalities for prequasiinvex functions
via fractional integrals.

\section{Main Results}

\begin{theorem}
\label{2.1}Let $A\subseteq $ $%
\mathbb{R}
$ be an open invex subset with respect to $\eta :A\times A\rightarrow 
\mathbb{R}
$ and $a,b\in A$ with $a<a+\eta (b,a).$ If $f:\left[ a,a+\eta (b,a)\right]
\rightarrow \left( 0,\infty \right) $ is a prequasiinvex function, $f\in L%
\left[ a,a+\eta (b,a)\right] $ and $\eta $ satisfies condition C then, the
following inequalities for fractional integrals holds:%
\begin{eqnarray}
&&\frac{\Gamma (\alpha +1)}{2\eta ^{\alpha }(b,a)}\left[ J_{a^{+}}^{\alpha
}f(a+\eta (b,a))+J_{\left( a+\eta (b,a)\right) ^{-}}^{\alpha }f(a)\right] 
\notag \\
&\leq &\max \left\{ f(a),f(a+\eta (b,a)\right\} \leq \max \left\{
f(a),f(b)\right\}  \label{2}
\end{eqnarray}

\begin{proof}
Since $a,b\in A$ and $A$ is an invex set with respect to $\eta $, for every $%
t\in \left[ 0,1\right] $, we have $a+t\eta (b,a)\in A.$ By prequasiinvexity
of $f$ and inequality (\ref{1-5}) for every $t\in \left[ 0,1\right] $ \ we
get 
\begin{eqnarray}
f\left( a+t\eta (b,a)\right) &=&f\left( a+\eta (b,a)+(1-t)\eta (a,a+\eta
(b,a))\right)  \notag \\
&\leq &\max \left\{ f(a),f(a+\eta (b,a)\right\}  \label{2-11}
\end{eqnarray}%
and similarly%
\begin{eqnarray*}
f\left( a+(1-t)\eta (b,a)\right) &=&f\left( a+\eta (b,a)+t\eta (a,a+\eta
(b,a))\right) \\
&\leq &\max \left\{ f(a),f(a+\eta (b,a)\right\} .
\end{eqnarray*}%
By adding these inequalities we have%
\begin{equation}
f\left( a+t\eta (b,a)\right) +f\left( a+(1-t)\eta (b,a)\right) \leq 2\max
\left\{ f(a),f(a+\eta (b,a)\right\}  \label{2-2}
\end{equation}%
Then multiplying both (\ref{2-2}) by $t^{\alpha -1}$ and integrating the
resulting inequality with respect to $t$ over $\left[ 0,1\right] ,$ we obtain%
\begin{equation*}
\dint\limits_{0}^{1}t^{\alpha -1}f\left( a+t\eta (b,a)\right)
dt+\dint\limits_{0}^{1}t^{\alpha -1}f\left( a+(1-t)\eta (b,a)\right) dt\leq
2\max \left\{ f(a),f(a+\eta (b,a)\right\} \dint\limits_{0}^{1}t^{\alpha
-1}dt.
\end{equation*}%
i.e.%
\begin{equation*}
\frac{\Gamma (\alpha )}{\eta ^{\alpha }(b,a)}\left[ J_{a^{+}}^{\alpha
}f(a+\eta (b,a))+J_{\left( a+\eta (b,a)\right) ^{-}}^{\alpha }f(a)\right]
\leq \frac{2\max \left\{ f(a),f(a+\eta (b,a)\right\} }{\alpha }.
\end{equation*}%
Using the mapping $\eta $ satisfies condition C the proof is completed.
\end{proof}
\end{theorem}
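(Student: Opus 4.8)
The plan is to push the prequasiinvexity of $f$ through the two symmetric substitutions $t\mapsto a+t\eta(b,a)$ and $t\mapsto a+(1-t)\eta(b,a)$, and then integrate against the kernel $t^{\alpha-1}$ to manufacture the Riemann--Liouville integrals. First I would record that invexity of $A$ guarantees $a+t\eta(b,a)\in A$ for all $t\in[0,1]$, so every value below is legitimate. The essential device is Condition C, or rather its consequence (\ref{1-5}): taking $y=a$, $x=b$, $t_{2}=0$, $t_{1}=1$ gives $\eta(a,a+\eta(b,a))=-\eta(b,a)$, which lets me rewrite
\[
a+t\eta(b,a)=\bigl(a+\eta(b,a)\bigr)+(1-t)\,\eta\bigl(a,a+\eta(b,a)\bigr).
\]
Applying prequasiinvexity at the base point $a+\eta(b,a)$ in the direction of $a$ then yields $f(a+t\eta(b,a))\le\max\{f(a),f(a+\eta(b,a))\}$, and the mirror computation gives the same bound for $f(a+(1-t)\eta(b,a))$.

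Adding these two pointwise inequalities produces
\[
f(a+t\eta(b,a))+f(a+(1-t)\eta(b,a))\le 2\max\{f(a),f(a+\eta(b,a))\},
\]
valid for every $t\in[0,1]$. Next I would multiply through by $t^{\alpha-1}\ge 0$ and integrate over $[0,1]$; since the right-hand side is constant in $t$, it simply picks up the factor $\int_{0}^{1}t^{\alpha-1}\,dt=1/\alpha$.

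The remaining work is to identify the two integrals on the left as fractional integrals. For $\int_{0}^{1}t^{\alpha-1}f(a+t\eta(b,a))\,dt$ I would substitute $x=a+t\eta(b,a)$, turning it into $\eta^{-\alpha}(b,a)\int_{a}^{a+\eta(b,a)}(x-a)^{\alpha-1}f(x)\,dx$, which is exactly $\Gamma(\alpha)\eta^{-\alpha}(b,a)\,J^{\alpha}_{(a+\eta(b,a))^{-}}f(a)$; the substitution $x=a+(1-t)\eta(b,a)$ in the second integral produces $\Gamma(\alpha)\eta^{-\alpha}(b,a)\,J^{\alpha}_{a^{+}}f(a+\eta(b,a))$. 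After multiplying by $\alpha/2$ and using $\alpha\Gamma(\alpha)=\Gamma(\alpha+1)$, the first inequality in (\ref{2}) falls out. For the second inequality it suffices to bound $f(a+\eta(b,a))$: prequasiinvexity with $x=a$, $y=b$, $t=1$ gives $f(a+\eta(b,a))\le\max\{f(a),f(b)\}$, whence $\max\{f(a),f(a+\eta(b,a))\}\le\max\{f(a),f(b)\}$. The main obstacle I anticipate is the correct bookkeeping with Condition C: one must verify that the rewriting of the argument is exactly the form to which the prequasiinvexity inequality applies, and keep the two symmetric substitutions straight so that they map cleanly onto $J^{\alpha}_{(a+\eta(b,a))^{-}}$ and $J^{\alpha}_{a^{+}}$ respectively.
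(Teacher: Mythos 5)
Your proposal is correct and follows essentially the same route as the paper: rewrite $a+t\eta(b,a)$ and $a+(1-t)\eta(b,a)$ via Condition C (through equality (\ref{1-5})) as points of the form $\bigl(a+\eta(b,a)\bigr)+s\,\eta\bigl(a,a+\eta(b,a)\bigr)$, apply prequasiinvexity, add, multiply by $t^{\alpha-1}$, integrate, and identify the resulting integrals as the two Riemann--Liouville integrals. In fact you make explicit two steps the paper leaves terse --- the exact choice $t_{2}=0$, $t_{1}=1$ in (\ref{1-5}) and the $t=1$ prequasiinvexity argument giving $f(a+\eta(b,a))\le\max\{f(a),f(b)\}$ for the second inequality in (\ref{2}).
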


\begin{remark}
If in Theorem \ref{2.1}, we let $\eta (b,a)=b-a$, then inequality (\ref{2})
become inequality (\ref{1-2}) of Theorem \ref{1.a}.

\begin{theorem}
\label{2.2}Let $K\subseteq $ $%
\mathbb{R}
$ be an open invex subset with respect to $\eta :K\times K\rightarrow 
\mathbb{R}
$ and $a,b\in K$ with $a<a+\eta (b,a)$ such that $f^{\prime }\in L\left[
a,a+\eta (b,a)\right] $. Suppose that $f:K\rightarrow 
\mathbb{R}
$ is a differentiable function. If $\left\vert f^{\prime }\right\vert $ is
prequasiinvex function on $A$ then the following inequality for fractional
integrals with $\alpha >0$ holds:%
\begin{eqnarray}
&&\left\vert \frac{f(a)+f\left( a+\eta (b,a)\right) }{2}-\frac{\Gamma
(\alpha +1)}{2\eta ^{\alpha }(b,a)}\left[ J_{a^{+}}^{\alpha }f(a+\eta
(b,a))+J_{\left( a+\eta (b,a)\right) ^{-}}^{\alpha }f(a)\right] \right\vert
\label{2-3} \\
&\leq &\frac{\eta (b,a)}{\alpha +1}\left( 1-\frac{1}{2^{\alpha }}\right)
\max \left\{ \left\vert f^{\prime }(a)\right\vert ,\left\vert f^{\prime
}(b)\right\vert \right\} .  \notag
\end{eqnarray}

\begin{proof}
Using lemma \ref{1.4} and the prequasiinvexity of $\left\vert f^{\prime
}\right\vert $ we get%
\begin{eqnarray*}
&&\left\vert \frac{f(a)+f\left( a+\eta (b,a)\right) }{2}-\frac{\Gamma
(\alpha +1)}{2\eta ^{\alpha }(b,a)}\left[ J_{a^{+}}^{\alpha }f(a+\eta
(b,a))+J_{\left( a+\eta (b,a)\right) ^{-}}^{\alpha }f(a)\right] \right\vert
\\
&\leq &\frac{\eta (b,a)}{2}\dint\limits_{0}^{1}\left\vert t^{\alpha }-\left(
1-t\right) ^{\alpha }\right\vert \left\vert f^{\prime }\left( a+t\eta
(b,a)\right) \right\vert dt \\
&\leq &\frac{\eta (b,a)}{2}\dint\limits_{0}^{1}\left\vert t^{\alpha }-\left(
1-t\right) ^{\alpha }\right\vert \max \left\{ \left\vert f^{\prime
}(a)\right\vert ,\left\vert f^{\prime }(b)\right\vert \right\} dt \\
&\leq &\frac{\eta (b,a)}{2}\left\{ \dint\limits_{0}^{\frac{1}{2}}\left[
\left( 1-t\right) ^{\alpha }-t^{\alpha }\right] \max \left\{ \left\vert
f^{\prime }(a)\right\vert ,\left\vert f^{\prime }(b)\right\vert \right\}
dt+\dint\limits_{\frac{1}{2}}^{1}\left[ t^{\alpha }-\left( 1-t\right)
^{\alpha }\right] \max \left\{ \left\vert f^{\prime }(a)\right\vert
,\left\vert f^{\prime }(b)\right\vert \right\} dt\right\} \\
&=&\eta (b,a)\max \left\{ \left\vert f^{\prime }(a)\right\vert ,\left\vert
f^{\prime }(b)\right\vert \right\} \left( \dint\limits_{0}^{\frac{1}{2}}%
\left[ \left( 1-t\right) ^{\alpha }-t^{\alpha }\right] dt\right) \\
&=&\frac{\eta (b,a)}{\left( \alpha +1\right) }\left( 1-\frac{1}{2^{\alpha }}%
\right) \max \left\{ \left\vert f^{\prime }(a)\right\vert ,\left\vert
f^{\prime }(b)\right\vert \right\} ,
\end{eqnarray*}%
which completes the proof.
\end{proof}
\end{theorem}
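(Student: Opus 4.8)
The plan is to apply the identity of Lemma \ref{1.4} and then estimate the resulting integral. Since $f$ is differentiable and $f'\in L[a,a+\eta(b,a)]$, the hypotheses of Lemma \ref{1.4} are satisfied, so the quantity inside the absolute value on the left-hand side of (\ref{2-3}) equals
\begin{equation*}
\frac{\eta(b,a)}{2}\int_0^1\left[t^\alpha-(1-t)^\alpha\right]f'\left(a+t\eta(b,a)\right)\,dt.
\end{equation*}
Taking absolute values and invoking the triangle inequality for integrals, I would bound the expression above by $\frac{\eta(b,a)}{2}\int_0^1\left|t^\alpha-(1-t)^\alpha\right|\,\left|f'\left(a+t\eta(b,a)\right)\right|\,dt$.

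The only hypothesis not yet used is the prequasiinvexity of $\left|f'\right|$. By definition this gives $\left|f'\left(a+t\eta(b,a)\right)\right|\le\max\left\{\left|f'(a)\right|,\left|f'(b)\right|\right\}$ for every $t\in[0,1]$, so the maximum may be factored out of the integral, reducing the estimate to $\frac{\eta(b,a)}{2}\max\left\{\left|f'(a)\right|,\left|f'(b)\right|\right\}\int_0^1\left|t^\alpha-(1-t)^\alpha\right|\,dt$.

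It then remains to evaluate $\int_0^1\left|t^\alpha-(1-t)^\alpha\right|\,dt$, which is the step that requires the most care, since the integrand changes sign. For $t<\tfrac12$ one has $t^\alpha<(1-t)^\alpha$, so the integrand equals $(1-t)^\alpha-t^\alpha$, whereas for $t>\tfrac12$ it equals $t^\alpha-(1-t)^\alpha$. I would therefore split the integral at $t=\tfrac12$ and note that the substitution $t\mapsto 1-t$ carries one piece onto the other, so that $\int_0^1\left|t^\alpha-(1-t)^\alpha\right|\,dt=2\int_0^{1/2}\left[(1-t)^\alpha-t^\alpha\right]\,dt$. A direct antiderivative computation then gives $\int_0^{1/2}\left[(1-t)^\alpha-t^\alpha\right]\,dt=\frac{1}{\alpha+1}\left(1-\frac{1}{2^\alpha}\right)$. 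Substituting this back, the factor $2$ cancels the leading $\tfrac12$ and yields the claimed right-hand side $\frac{\eta(b,a)}{\alpha+1}\left(1-\frac{1}{2^\alpha}\right)\max\left\{\left|f'(a)\right|,\left|f'(b)\right|\right\}$. The main (mild) obstacle is thus the sign analysis of $t^\alpha-(1-t)^\alpha$ about $t=\tfrac12$; once the symmetric splitting is arranged, everything else is routine integration.
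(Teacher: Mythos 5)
Your proposal is correct and follows essentially the same route as the paper: apply Lemma \ref{1.4}, take absolute values, use prequasiinvexity to factor out $\max\left\{ \left\vert f^{\prime }(a)\right\vert ,\left\vert f^{\prime }(b)\right\vert \right\}$, and evaluate $\int_0^1\left\vert t^{\alpha }-(1-t)^{\alpha }\right\vert dt$ by splitting at $t=\tfrac12$ and exploiting the $t\mapsto 1-t$ symmetry. The computation of the resulting integral as $\frac{2}{\alpha +1}\left( 1-\frac{1}{2^{\alpha }}\right)$ matches the paper's, so nothing is missing.
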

\end{remark}

\begin{remark}
a) If in Theorem \ref{2.2}, we let $\eta (b,a)=b-a$, then inequality (\ref%
{2-3}) become inequality (\ref{1-3}) of Theorem \ref{1.c}

b) If in Theorem\ref{2.2}, we let $\alpha =1$, then inequality (\ref{2-3})
become inequality (\ref{1-6}) of Theorem \ref{1.3}.

c) In Theorem\ref{2.2}, assume that $\eta $ satisfies condition C.Using
inequality (\ref{2-11}) we get%
\begin{eqnarray*}
&&\left\vert \frac{f(a)+f\left( a+\eta (b,a)\right) }{2}-\frac{\Gamma
(\alpha +1)}{2\eta ^{\alpha }(b,a)}\left[ J_{a^{+}}^{\alpha }f(a+\eta
(b,a))+J_{\left( a+\eta (b,a)\right) ^{-}}^{\alpha }f(a)\right] \right\vert
\\
&\leq &\frac{\eta (b,a)}{\left( \alpha +1\right) }\left( 1-\frac{1}{%
2^{\alpha }}\right) \max \left\{ \left\vert f^{\prime }(a)\right\vert
,\left\vert f^{\prime }(a+\eta (b,a))\right\vert \right\}
\end{eqnarray*}
\end{remark}

\begin{theorem}
\label{2.5}Let $A\subseteq $ $%
\mathbb{R}
$ be an open invex subset with respect to $\eta :A\times A\rightarrow 
\mathbb{R}
$ and $a,b\in A$ with $a<a+\eta (b,a)$ such that $f^{\prime }\in L\left[
a,a+\eta (b,a)\right] $. Suppose that $f:A\rightarrow 
\mathbb{R}
$ is a differentiable function. If $\left\vert f^{\prime }\right\vert ^{q}$
is prequasiinvex function on $A$ for some fixed $q>1$ then the following
inequality holds:%
\begin{eqnarray}
&&\left\vert \frac{f(a)+f\left( a+\eta (b,a)\right) }{2}-\frac{\Gamma
(\alpha +1)}{2\eta ^{\alpha }(b,a)}\left[ J_{a^{+}}^{\alpha }f(a+\eta
(b,a))+J_{\left( a+\eta (b,a)\right) ^{-}}^{\alpha }f(a)\right] \right\vert
\label{2-9} \\
&\leq &\frac{\eta (b,a)}{\left( \alpha +1\right) }\left( 1-\frac{1}{%
2^{\alpha }}\right) \left( \max \left\{ \left\vert f^{\prime }(a)\right\vert
^{q},\left\vert f^{\prime }(b)\right\vert ^{q}\right\} \right) ^{\frac{1}{q}}
\notag
\end{eqnarray}%
where $\frac{1}{p}+\frac{1}{q}=1$ and $\alpha >0.$
\end{theorem}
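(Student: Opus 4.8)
The plan is to proceed exactly as in the proof of Theorem \ref{2.2}, but to replace the crude pointwise estimate of $\left\vert f^{\prime }\right\vert$ by a weighted H\"older (power-mean) step, so that the hypothesis is used on $\left\vert f^{\prime }\right\vert ^{q}$ rather than on $\left\vert f^{\prime }\right\vert$. First I would invoke Lemma \ref{1.4} and pass to absolute values inside the integral, giving
\begin{equation*}
\left\vert \frac{f(a)+f\left( a+\eta (b,a)\right) }{2}-\frac{\Gamma (\alpha +1)}{2\eta ^{\alpha }(b,a)}\left[ J_{a^{+}}^{\alpha }f(a+\eta (b,a))+J_{\left( a+\eta (b,a)\right) ^{-}}^{\alpha }f(a)\right] \right\vert \leq \frac{\eta (b,a)}{2}\int_{0}^{1}\left\vert t^{\alpha }-\left( 1-t\right) ^{\alpha }\right\vert \left\vert f^{\prime }\left( a+t\eta (b,a)\right) \right\vert dt.
\end{equation*}

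Next, abbreviating $w(t)=\left\vert t^{\alpha }-\left( 1-t\right) ^{\alpha }\right\vert$ and writing $w(t)\left\vert f^{\prime }\right\vert =w(t)^{1/p}\cdot w(t)^{1/q}\left\vert f^{\prime }\right\vert$, which is legitimate because $\frac{1}{p}+\frac{1}{q}=1$, I would apply H\"older's inequality with $f=w^{1/p}$ and $g=w^{1/q}\left\vert f^{\prime }\right\vert$ to obtain
\begin{equation*}
\int_{0}^{1}w(t)\left\vert f^{\prime }\left( a+t\eta (b,a)\right) \right\vert dt\leq \left( \int_{0}^{1}w(t)\,dt\right) ^{1/p}\left( \int_{0}^{1}w(t)\left\vert f^{\prime }\left( a+t\eta (b,a)\right) \right\vert ^{q}dt\right) ^{1/q}.
\end{equation*}
Since $A$ is invex, $a+t\eta (b,a)\in A$ for every $t\in \left[ 0,1\right]$, so the prequasiinvexity of $\left\vert f^{\prime }\right\vert ^{q}$ (applied with the points $a$ and $b$) yields $\left\vert f^{\prime }\left( a+t\eta (b,a)\right) \right\vert ^{q}\leq \max \left\{ \left\vert f^{\prime }(a)\right\vert ^{q},\left\vert f^{\prime }(b)\right\vert ^{q}\right\}$ uniformly in $t$; I would pull this constant out of the second integral. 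The two remaining integrals are then both $\int_{0}^{1}w(t)\,dt$, and because $\frac{1}{p}+\frac{1}{q}=1$ the factors $\left( \int_{0}^{1}w\right) ^{1/p}$ and $\left( \int_{0}^{1}w\right) ^{1/q}$ recombine into the single factor $\int_{0}^{1}w(t)\,dt$. Finally I would evaluate this integral by splitting at $t=\tfrac{1}{2}$ exactly as in Theorem \ref{2.2}, using $t^{\alpha }\leq (1-t)^{\alpha }$ on $[0,\tfrac12]$ and the reverse on $[\tfrac12,1]$, which gives $\int_{0}^{1}w(t)\,dt=\frac{2}{\alpha +1}\left( 1-\frac{1}{2^{\alpha }}\right)$; multiplying by $\frac{\eta (b,a)}{2}$ produces the claimed bound.

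The one genuinely delicate point is the interpolation step: one must split $w(t)$ itself and apply H\"older against the measure $w(t)\,dt$, so that after invoking prequasiinvexity the exponents $1/p$ and $1/q$ add to $1$ and the awkward quantity $\left( \int_0^1 w^p\right)^{1/p}$ never appears. This is precisely what keeps the final constant as simple as $\frac{1}{\alpha +1}\left( 1-2^{-\alpha }\right)$, rather than the $\left( \alpha p+1\right)^{-1/p}$-type constant that a naive H\"older estimate (peeling $\left\vert f^{\prime }\right\vert$ off the entire weight $w$) would force; compare Theorem \ref{1.d}. No appeal to Condition C is needed here, since the resulting bound is expressed directly in terms of $f^{\prime }(a)$ and $f^{\prime }(b)$.
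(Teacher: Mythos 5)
Your proposal is correct and follows essentially the same route as the paper: the paper likewise writes the weight as $\left\vert t^{\alpha }-(1-t)^{\alpha }\right\vert ^{\frac{1}{p}+\frac{1}{q}}$, applies H\"older against the measure $\left\vert t^{\alpha }-(1-t)^{\alpha }\right\vert dt$ (the power-mean step you describe), bounds $\left\vert f^{\prime }(a+t\eta (b,a))\right\vert ^{q}$ by $\max \left\{ \left\vert f^{\prime }(a)\right\vert ^{q},\left\vert f^{\prime }(b)\right\vert ^{q}\right\}$ via prequasiinvexity, and evaluates $\int_{0}^{1}\left\vert t^{\alpha }-(1-t)^{\alpha }\right\vert dt=\frac{2}{\alpha +1}\left( 1-\frac{1}{2^{\alpha }}\right)$ by splitting at $t=\frac{1}{2}$. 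Your closing observations (why the naive H\"older would give the $(\alpha p+1)^{-1/p}$ constant of Theorem \ref{1.d}, and why Condition C is not needed) match the structure of the paper's argument as well.
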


\begin{proof}
From lemma\ref{1.4} and using H\"{o}lder inequality with properties of
modulus, we have%
\begin{eqnarray*}
&&\left\vert \frac{f(a)+f\left( a+\eta (b,a)\right) }{2}-\frac{\Gamma
(\alpha +1)}{2\eta ^{\alpha }(b,a)}\left[ J_{a^{+}}^{\alpha }f(a+\eta
(b,a))+J_{\left( a+\eta (b,a)\right) ^{-}}^{\alpha }f(a)\right] \right\vert
\\
&\leq &\frac{\eta (b,a)}{2}\dint\limits_{0}^{1}\left\vert t^{\alpha }-\left(
1-t\right) ^{\alpha }\right\vert ^{\frac{1}{p}+\frac{1}{q}}\left\vert
f^{\prime }\left( a+t\eta (b,a)\right) \right\vert dt \\
&\leq &\frac{\eta (b,a)}{2}\left( \dint\limits_{0}^{1}\left\vert t^{\alpha
}-\left( 1-t\right) ^{\alpha }\right\vert dt\right) ^{\frac{1}{p}}\left(
\dint\limits_{0}^{1}\left\vert t^{\alpha }-\left( 1-t\right) ^{\alpha
}\right\vert \left\vert f^{\prime }\left( a+t\eta (b,a)\right) \right\vert
^{q}dt\right) ^{\frac{1}{q}}.
\end{eqnarray*}%
On the other hand, we have%
\begin{eqnarray*}
\dint\limits_{0}^{1}\left\vert t^{\alpha }-\left( 1-t\right) ^{\alpha
}\right\vert dt &=&\dint\limits_{0}^{\frac{1}{2}}\left[ \left( 1-t\right)
^{\alpha }-t^{\alpha }\right] dt+\dint\limits_{\frac{1}{2}}^{1}\left[
t^{\alpha }-\left( 1-t\right) ^{\alpha }\right] dt \\
&=&\frac{2}{\alpha +1}\left( 1-\frac{1}{2^{\alpha }}\right) .
\end{eqnarray*}%
Since $\left\vert f^{\prime }\right\vert ^{q}$ is prequasiinvex function on $%
A$, we obtain%
\begin{equation*}
\left\vert f^{\prime }\left( a+t\eta (b,a)\right) \right\vert ^{q}\leq \max
\left\{ \left\vert f^{\prime }(a)\right\vert ^{q},\left\vert f^{\prime
}(b)\right\vert ^{q}\right\} ,\ \ t\in \left[ 0,1\right]
\end{equation*}%
and 
\begin{eqnarray*}
\dint\limits_{0}^{1}\left\vert t^{\alpha }-\left( 1-t\right) ^{\alpha
}\right\vert \left\vert f^{\prime }\left( a+t\eta (b,a)\right) \right\vert
^{q}dt &\leq &\dint\limits_{0}^{1}\left\vert t^{\alpha }-\left( 1-t\right)
^{\alpha }\right\vert \max \left\{ \left\vert f^{\prime }(a)\right\vert
^{q},\left\vert f^{\prime }(b)\right\vert ^{q}\right\} dt \\
&=&\max \left\{ \left\vert f^{\prime }(a)\right\vert ^{q},\left\vert
f^{\prime }(b)\right\vert ^{q}\right\} \left( \dint\limits_{0}^{\frac{1}{2}}%
\left[ \left( 1-t\right) ^{\alpha }-t^{\alpha }\right] dt+\dint\limits_{%
\frac{1}{2}}^{1}\left[ t^{\alpha }-\left( 1-t\right) ^{\alpha }\right]
\right) \\
&=&\frac{2}{\alpha +1}\left( 1-\frac{1}{2^{\alpha }}\right) \max \left\{
\left\vert f^{\prime }(a)\right\vert ^{q},\left\vert f^{\prime
}(b)\right\vert ^{q}\right\}
\end{eqnarray*}%
from here we obtain inequality (\ref{2-9}) which completes the proof.
\end{proof}

\begin{remark}
a) If in Theorem\ref{2.5}, we let $\eta (b,a)=b-a$ then inequality (\ref{2-9}%
)become inequality (\ref{1-44}) Theorem\ref{1.e}.

b) In Theorem\ref{2.5}, assume that $\eta $ satisfies condition C.Using
inequality (\ref{2-11}) we get%
\begin{eqnarray*}
&&\left\vert \frac{f(a)+f\left( a+\eta (b,a)\right) }{2}-\frac{\Gamma
(\alpha +1)}{2\eta ^{\alpha }(b,a)}\left[ J_{a^{+}}^{\alpha }f(a+\eta
(b,a))+J_{\left( a+\eta (b,a)\right) ^{-}}^{\alpha }f(a)\right] \right\vert
\\
&\leq &\frac{\eta (b,a)}{\left( \alpha +1\right) }\left( 1-\frac{1}{%
2^{\alpha }}\right) \left( \max \left\{ \left\vert f^{\prime }(a)\right\vert
^{q},\left\vert f^{\prime }(a+\eta (b,a))\right\vert ^{q}\right\} \right) ^{%
\frac{1}{q}}.
\end{eqnarray*}

\begin{theorem}
\label{2.4}Let $A\subseteq $ $%
\mathbb{R}
$ be an open invex subset with respect to $\eta :A\times A\rightarrow 
\mathbb{R}
$ and $a,b\in A$ with $a<a+\eta (b,a)$ such that $f^{\prime }\in L\left[
a,a+\eta (b,a)\right] $. Suppose that $f:A\rightarrow 
\mathbb{R}
$ is a differentiable function. If $\left\vert f^{\prime }\right\vert ^{q}$
is prequasiinvex function on $A$ for some fixed $q>1$ then the following
inequality holds:%
\begin{eqnarray}
&&\left\vert \frac{f(a)+f\left( a+\eta (b,a)\right) }{2}-\frac{\Gamma
(\alpha +1)}{2\eta ^{\alpha }(b,a)}\left[ J_{a^{+}}^{\alpha }f(a+\eta
(b,a))+J_{\left( a+\eta (b,a)\right) ^{-}}^{\alpha }f(a)\right] \right\vert
\label{2-8} \\
&\leq &\frac{\eta (b,a)}{2\left( \alpha p+1\right) ^{\frac{1}{p}}}\left(
\max \left\{ \left\vert f^{\prime }(a)\right\vert ^{q},\left\vert f^{\prime
}(a+\eta (b,a))\right\vert ^{q}\right\} \right) ^{\frac{1}{q}}.  \notag
\end{eqnarray}%
where $\frac{1}{p}+\frac{1}{q}=1$ and $\alpha \in \left[ 0,1\right] .$
\end{theorem}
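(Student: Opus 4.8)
The plan is to reuse the machinery behind Theorem \ref{2.5}, but to load the entire weight $\left\vert t^{\alpha }-\left( 1-t\right) ^{\alpha }\right\vert $ onto the H\"{o}lder factor of exponent $p$ instead of splitting it between the two factors. Starting from the identity in Lemma \ref{1.4} and passing to absolute values, I would first write
\begin{equation*}
\left\vert \frac{f(a)+f\left( a+\eta (b,a)\right) }{2}-\frac{\Gamma (\alpha
+1)}{2\eta ^{\alpha }(b,a)}\left[ J_{a^{+}}^{\alpha }f(a+\eta
(b,a))+J_{\left( a+\eta (b,a)\right) ^{-}}^{\alpha }f(a)\right] \right\vert
\leq \frac{\eta (b,a)}{2}\dint\limits_{0}^{1}\left\vert t^{\alpha }-\left(
1-t\right) ^{\alpha }\right\vert \left\vert f^{\prime }\left( a+t\eta
(b,a)\right) \right\vert dt,
\end{equation*}
and then apply the H\"{o}lder inequality with exponents $p$ and $q$ to the right-hand integral, producing
\begin{equation*}
\frac{\eta (b,a)}{2}\left( \dint\limits_{0}^{1}\left\vert t^{\alpha }-\left(
1-t\right) ^{\alpha }\right\vert ^{p}dt\right) ^{\frac{1}{p}}\left(
\dint\limits_{0}^{1}\left\vert f^{\prime }\left( a+t\eta (b,a)\right)
\right\vert ^{q}dt\right) ^{\frac{1}{q}}.
\end{equation*}

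The second factor is handled exactly as in Theorem \ref{2.5}: applying the prequasiinvexity estimate (\ref{2-11}) to $\left\vert f^{\prime }\right\vert ^{q}$ in place of $f$ gives $\left\vert f^{\prime }\left( a+t\eta (b,a)\right) \right\vert ^{q}\leq \max \left\{ \left\vert f^{\prime }(a)\right\vert ^{q},\left\vert f^{\prime }(a+\eta (b,a))\right\vert ^{q}\right\} $ for every $t\in \left[ 0,1\right] $, so that the second factor is bounded by $\left( \max \left\{ \left\vert f^{\prime }(a)\right\vert ^{q},\left\vert f^{\prime }(a+\eta (b,a))\right\vert ^{q}\right\} \right) ^{\frac{1}{q}}$, matching the right-hand side of (\ref{2-8}). (As in the remark following Theorem \ref{2.5}, the endpoint form of this majorant is the one produced via condition C; bare prequasiinvexity yields the same constant with $\left\vert f^{\prime }(b)\right\vert $, and the two coincide when $\eta (b,a)=b-a$.)

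The decisive step, and the only place the restriction $\alpha \in \left[ 0,1\right] $ is needed, is the evaluation of the first factor. I would not integrate $\left\vert t^{\alpha }-\left( 1-t\right) ^{\alpha }\right\vert ^{p}$ directly, as that integral has no convenient closed form; instead I would use the subadditivity of $s\mapsto s^{\alpha }$ for $0<\alpha \leq 1$, that is $\left( x+y\right) ^{\alpha }\leq x^{\alpha }+y^{\alpha }$ for $x,y\geq 0$, which upon setting $x=s_{1}-s_{2}$, $y=s_{2}$ (with $s_{1}\geq s_{2}\geq 0$) yields the H\"{o}lder-continuity bound $\left\vert s_{1}^{\alpha }-s_{2}^{\alpha }\right\vert \leq \left\vert s_{1}-s_{2}\right\vert ^{\alpha }$. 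Taking $s_{1}=t$ and $s_{2}=1-t$ gives $\left\vert t^{\alpha }-\left( 1-t\right) ^{\alpha }\right\vert \leq \left\vert 2t-1\right\vert ^{\alpha }$, hence
\begin{equation*}
\dint\limits_{0}^{1}\left\vert t^{\alpha }-\left( 1-t\right) ^{\alpha
}\right\vert ^{p}dt\leq \dint\limits_{0}^{1}\left\vert 2t-1\right\vert
^{\alpha p}dt=\frac{1}{\alpha p+1},
\end{equation*}
so the first factor is at most $\left( \alpha p+1\right) ^{-1/p}$.

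Combining the two factors then delivers inequality (\ref{2-8}). The main obstacle is precisely this last evaluation: for $\alpha >1$ the subadditivity bound $\left\vert t^{\alpha }-\left( 1-t\right) ^{\alpha }\right\vert \leq \left\vert 2t-1\right\vert ^{\alpha }$ is false, so the integral $\dint_{0}^{1}\left\vert t^{\alpha }-\left( 1-t\right) ^{\alpha }\right\vert ^{p}dt$ can no longer be collapsed to the clean value $1/(\alpha p+1)$, which is exactly why the hypothesis $\alpha \in \left[ 0,1\right] $ is imposed here but not in Theorem \ref{2.5}. Everything else is the same routine H\"{o}lder/prequasiinvexity bookkeeping.
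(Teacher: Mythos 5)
Your proposal is correct and follows essentially the same route as the paper's own proof: the same application of Lemma \ref{1.4} followed by H\"{o}lder's inequality with the full weight $\left\vert t^{\alpha }-\left( 1-t\right) ^{\alpha }\right\vert $ placed on the exponent-$p$ factor, the same bound $\left\vert t^{\alpha }-\left( 1-t\right) ^{\alpha }\right\vert \leq \left\vert 1-2t\right\vert ^{\alpha }$ for $\alpha \in \left[ 0,1\right] $ giving $\frac{1}{\alpha p+1}$, and the same prequasiinvexity bound on the remaining factor. You are in fact somewhat more careful than the paper, which merely asserts the H\"{o}lder-continuity inequality without the subadditivity justification and glosses over the fact that the endpoint form $\left\vert f^{\prime }\left( a+\eta (b,a)\right) \right\vert $ in (\ref{2-8}) really requires condition C via (\ref{2-11}) (or prequasiinvexity interpreted on the interval $\left[ a,a+\eta (b,a)\right] $), a point you flag explicitly.
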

\end{remark}

\begin{proof}
From lemma\ref{1.4} and using H\"{o}lder inequality with properties of
modulus, we have%
\begin{eqnarray*}
&&\left\vert \frac{f(a)+f\left( a+\eta (b,a)\right) }{2}-\frac{\Gamma
(\alpha +1)}{2\eta ^{\alpha }(b,a)}\left[ J_{a^{+}}^{\alpha }f(a+\eta
(b,a))+J_{\left( a+\eta (b,a)\right) ^{-}}^{\alpha }f(a)\right] \right\vert
\\
&\leq &\frac{\eta (b,a)}{2}\dint\limits_{0}^{1}\left\vert t^{\alpha }-\left(
1-t\right) ^{\alpha }\right\vert \left\vert f^{\prime }\left( a+t\eta
(b,a)\right) \right\vert dt \\
&\leq &\frac{\eta (b,a)}{2}\left( \dint\limits_{0}^{1}\left\vert t^{\alpha
}-\left( 1-t\right) ^{\alpha }\right\vert ^{p}dt\right) ^{\frac{1}{p}}\left(
\dint\limits_{0}^{1}\left\vert f^{\prime }\left( a+t\eta (b,a)\right)
\right\vert ^{q}dt\right) ^{\frac{1}{q}}.
\end{eqnarray*}%
We know that for $\alpha \in \left[ 0,1\right] $ and $\forall t_{1},t_{2}\in %
\left[ 0,1\right] $, 
\begin{equation*}
\left\vert t_{1}^{\alpha }-t_{2}^{\alpha }\right\vert \leq \left\vert
t_{1}-t_{2}\right\vert ^{\alpha },
\end{equation*}%
therefore%
\begin{eqnarray*}
\dint\limits_{0}^{1}\left\vert t^{\alpha }-\left( 1-t\right) ^{\alpha
}\right\vert ^{p}dt &\leq &\dint\limits_{0}^{1}\left\vert 1-2t\right\vert
^{\alpha p}dt \\
&=&\dint\limits_{0}^{\frac{1}{2}}\left[ 1-2t\right] ^{\alpha
p}dt+\dint\limits_{\frac{1}{2}}^{1}\left[ 2t-1\right] ^{\alpha p}dt \\
&=&\frac{1}{\alpha p+1}.
\end{eqnarray*}%
Since $\left\vert f^{\prime }\right\vert ^{q}$ is prequasiinvex on $\left[
a,a+\eta (b,a)\right] ,$ we have inequality (\ref{2-8}), which completes the
proof.
\end{proof}

\begin{remark}
a) If in Theorem \ref{2.4}, we let $\eta (b,a)=b-a$ then inequality (\ref%
{2-8}) become inequality (\ref{1-4}) of Theorem\ref{1.d}.

b) If in Theorem \ref{2.4}, we let $\alpha =1$then inequality (\ref{2-8})
become inequality (\ref{1-7}) of Theorem\ref{1.b}.

c) In Theorem \ref{2.4}, assume that $\eta $ satisfies condition C. Using
inequality (\ref{2-11}) we get%
\begin{eqnarray*}
&&\left\vert \frac{f(a)+f\left( a+\eta (b,a)\right) }{2}-\frac{\Gamma
(\alpha +1)}{2\eta ^{\alpha }(b,a)}\left[ J_{a^{+}}^{\alpha }f(a+\eta
(b,a))+J_{\left( a+\eta (b,a)\right) ^{-}}^{\alpha }f(a)\right] \right\vert
\\
&\leq &\frac{\eta (b,a)}{2\left( \alpha p+1\right) ^{\frac{1}{p}}}\left(
\max \left\{ \left\vert f^{\prime }(a)\right\vert ^{q},\left\vert f^{\prime
}(a+\eta (b,a))\right\vert ^{q}\right\} \right) ^{\frac{1}{q}}.
\end{eqnarray*}
\end{remark}

\end{document}